\CompileMatrices\SelectTips{cm}{12}
\theoremstyle{plain}
\newtheorem{Thm}{\sc Theorem}[section]
\newtheorem{theorem}[Thm]{\sc Theorem}
\newtheorem{corollary}[Thm]{\sc Corollary}
\newtheorem*{corollary*}{\sc Corollary}
\newtheorem{proposition}[Thm]{\sc Proposition}
\newtheorem*{proposition*}{\sc Proposition}
\newtheorem{lemma}[Thm]{\sc Lemma}
\theoremstyle{remark}
\newtheorem{remark}[Thm]{Remark}
\newtheorem*{example*}{Example}
\newtheorem*{remark*}{Remark}
\newcommand{\cE}{{\mathcal E}}
\newcommand{\cO}{{\mathcal O}}
\newcommand{\cS}{{\mathcal S}}
\newcommand{\cX}{{\mathcal X}}
\newcommand{\PP}{{\mathbb P}}
\newcommand{\RR}{{\mathbb R}}
\newcommand{\ZZ}{{\mathbb Z}}
\newcommand{\cEnd}{{\mathop{\mathcal{E}nd}\,}}
\newcommand{\Spec}{\mathop{\rm Spec \, }}
\newcommand{\Pic}{{\mathop{\rm Pic\, }}}
\newcommand{\rk}{\mathop{{\rm rk}}}
\begin{document}

\title{On boundedness of semistable sheaves}
\author{Adrian Langer}
\address{Institute of Mathematics, University of Warsaw,
	ul.\ Banacha 2, 02-097 Warszawa, Poland}
\email{alan@mimuw.edu.pl}

\subjclass[2010]{Primary 14F05, Secondary 14D20, 14J60}

\thanks{
	The author was partially supported by Polish National Centre (NCN) contract number 
	2018/29/B/ST1/01232. }

\date{\today}

\maketitle
\medskip

\begin{abstract} 
	We give a new simple proof of boundedness of the family of semistable sheaves with fixed numerical invariants on a fixed smooth projective variety. In characteristic zero our method gives a quick proof of Bogomolov's inequality for semistable sheaves on a smooth projective variety of any dimension $\ge 2$ without using any restriction theorems. 
\end{abstract}

\section{Introduction}

Let $X$ be a smooth projective variety defined over an algebraically closed  field $k$
and let $H$ be an ample divisor on $X$. A family $\cS$ of isomorphism classes of coherent sheaves on $X$ is called bounded if there exists a scheme $S$ of finite type over $k$ and an $S$-flat family $\cE$ of coherent sheaves on the fibers of $X\times S\to S$, which contains isomorphism classes of all sheaves from $\cS$.  The following theorem is a crucial step in the construction of a projective moduli scheme of semistable sheaves with fixed numerical invariants:

\begin{theorem}\label{main}
	The family of isomorphism classes of slope $H$-semistable torsion free coherent sheaves on $X$ with fixed numerical invariants (e.g., with fixed rank and Chern classes) is bounded.
\end{theorem}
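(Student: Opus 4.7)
The plan is to reduce the boundedness statement to a single cohomological estimate and then prove that estimate by induction on $\dim X$. By Kleiman's criterion, a family of coherent sheaves with fixed Hilbert polynomial is bounded provided one has a uniform upper bound on $h^0(X, E(m_0))$ for some fixed $m_0$ as $E$ varies over the family. Since rank and Chern classes determine the Hilbert polynomial via Hirzebruch--Riemann--Roch, the task reduces to a single-sheaf inequality of the shape
\[
	h^0(X, E) \;\le\; \frac{r}{d!}\bigl[\mu_{\max}(E) + C\bigr]_+^d + \text{(lower-order terms in }\mu_{\max}\text{)},
\]
valid for every torsion-free coherent sheaf $E$ of rank $r$ on any smooth projective $d$-fold, where $\mu_{\max}$ is computed with respect to $H$, $[\,\cdot\,]_+ := \max(\,\cdot\,, 0)$, and $C = C(X, H)$ is a constant independent of $E$. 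For slope-semistable $E$ we have $\mu_{\max}(E(m_0)) = \mu(E) + m_0 H^d$, so applying the estimate to $E(m_0)$ gives exactly the uniform bound required by Kleiman.

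I would establish the estimate by induction on $d = \dim X$. The base case $d = 1$ is a short exercise in Riemann--Roch on a smooth curve: a semistable sheaf of negative slope has no global sections, and summing the Euler characteristics of the positive-slope Harder--Narasimhan factors of $E$ yields the desired bound.

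For the inductive step, pick a smooth divisor $Y \in |H|$ transverse to the non-locally free locus of $E$ and use the short exact sequence
\[
	0 \to E(-H) \to E \to E|_Y \to 0,
\]
which gives $h^0(X, E) \le h^0(X, E(-H)) + h^0(Y, E|_Y)$. Iterating with $E, E(-H), E(-2H), \ldots$, the process terminates once the slope has dropped below zero, reducing the problem to summing bounds on $h^0(Y, E|_Y(-kH))$ that are supplied by the inductive hypothesis on the $(d-1)$-dimensional variety $Y$.

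The main obstacle is to control $\mu_{\max}(E|_Y)$ in terms of $\mu_{\max}(E)$ without appealing to a Mehta--Ramanathan-type restriction theorem, which the introduction signals as a goal of the new approach. The natural move is to lift a maximal destabilising subsheaf of $E|_Y$ back to a subsheaf of a suitable twist of $E$ and compare slopes, producing an estimate $\mu_{\max}(E|_Y) \le \mu_{\max}(E) + C'$ with $C' = C'(X, H)$ uniform across the family. Making this uniform slope comparison work with constants independent of $\rk E$ is, I expect, the technical heart of the argument; once it is in hand, the induction closes and Theorem~\ref{main} follows.
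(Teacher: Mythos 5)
There is a genuine gap, and it sits exactly where you locate ``the technical heart'': the uniform estimate $\mu_{\max}(E|_Y)\le \mu_{\max}(E)+C'$ with $C'=C'(X,H)$ independent of $E$. No naive lifting argument produces this. A destabilizing subsheaf $F\subset E|_Y$ does not lift to a subsheaf of $E$ or of any fixed twist of $E$; the standard substitutes (taking the preimage of $F$ under $E\to E|_Y$, i.e.\ the elementary modification $\ker(E\to (E|_Y)/F)$) compare $\mu$ of the modified sheaf with the \emph{rank} of the quotient, not with the degree of $F$ on $Y$, and the resulting bounds degrade with the invariants of $F$ itself rather than giving a constant depending only on $(X,H)$. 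In fact, a uniform bound of the shape you want is essentially equivalent to a restriction theorem for semistable sheaves: in characteristic zero it is the content of Grauert--M\"ulich/Flenner-type theorems (whose proofs use the whole family of hyperplanes via an incidence-variety argument, not a lifting of a single subsheaf), and in positive characteristic --- which is the case this theorem is really about --- such statements \emph{fail} for merely slope-semistable sheaves unless one controls Frobenius pullbacks or already has Bogomolov-type inequalities in hand. This circularity (restriction needs Bogomolov, Bogomolov classically needs boundedness or restriction) is precisely the obstruction that blocked the characteristic-$p$ case for decades, so you cannot expect to ``close the induction'' by an elementary slope comparison. A secondary inaccuracy: Kleiman's boundedness criterion does not follow from a bound on $h^0(X,E(m_0))$ for a single twist; it requires uniform bounds on $h^0$ of the successive restrictions of $E$ to a general chain of hyperplane sections, so the restriction problem enters your scheme twice.

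For comparison, the paper goes the other way around. It first proves Bogomolov's inequality for semistable sheaves on $\PP^n$ by induction on dimension with \emph{no} restriction theorem at all: one fibers the blow-up of $\PP^n$ along a general pencil of hyperplanes over $\PP^1$, applies the inductive hypothesis on the fibers to get Bogomolov's inequality for the degenerate polarization $(p^*\cO_{\Lambda}(1),q^*H^{n-2})$, and then uses a change-of-polarization argument (Proposition \ref{pol-change}, based on the Hodge index theorem and a variation of the polarization along a segment) to transfer it to $q^*H^{n-1}$. Bogomolov's inequality then yields an \emph{effective} restriction theorem on $\PP^n$ (Theorem \ref{restriction}), which gives boundedness on projective spaces by Maruyama's classical reduction; the case of an arbitrary $X$ (in fact of a projective morphism $X\to S$, so also mixed characteristic) is handled by the Le Potier--Simpson projection method, pushing forward by a finite linear projection to $\PP^d$ and controlling $\mu_{\max}$ of the pushforward. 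If you want to salvage your plan, the missing input is exactly such a restriction statement, and the lesson of the paper is that it is obtained as a \emph{consequence} of Bogomolov's inequality proved first by the polarization-change trick, not as an a priori lemma about lifting destabilizing subsheaves.
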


Many people contributed to the proof of this result including D. Gieseker, S. Kleiman, M. Maruyama, F. Takemoto and the author. 
The final version in characteristic zero was proven by M. Maruyama using the Grauert--M\"ulich type restriction theorem. 
In general, the above theorem was conjectured by M. Maruyama in late seventies and finally proven in \cite{La1}.  It allowed to finish the construction of the moduli scheme of semistable sheaves on $X$ in case $k$ has positive characteristic $p$. We refer to \cite{HL}, \cite{La1} and \cite{Ma} for more on the history of the problem and earlier results.

The proof of this result in \cite{La1} is rather complicated and it uses some purely characteristic $p$ methods through the study of Frobenius pullbacks of torsion free sheaves. Since its appearance there were no simplifications or  other proofs of this result. In this note we give a new simple proof of Theorem \ref{main} that avoids any characteristic $p$ methods. In fact, as in \cite{La1}, we prove a more general result that works in mixed characteristic (see Theorem \ref{boundedness}).

Our proof proceeds by reducing the statement to the case of projective spaces and proving Bogomolov's inequality by induction on the dimension using changes of polarization. The main novelty of this approach is that one can prove Bogomolov's inequality on projective spaces without using any restriction theorems. This in turn allows to prove restriction theorems and reduce the problem to lower dimensions. This method is new even  in the characteristic zero case. A small variant of this approach allows us to give a new simple proof of Bogomolov's inequality on higher dimensional varieties in characteristic zero without using any restriction theorems (see Theorem \ref{Bog-0}). This inequality implies effective restriction theorems for slope stability and slope semistability changing the usual logic and allowing to dispense with proving the Mehta--Ramanathan restriction theorems.

\section{Preliminaries}

In this section $X$ is a smooth projective $n$-dimensional variety defined over an algebraically closed field $k$. We assume that $n\ge 1$.

\subsection{Semistability}

Let $(L_1,..., L_{n-1})$ be a collection of nef divisors on $X$. Let us assume that the $1$-cycle $L_1...L_{n-1}$ is numerically nontrivial, i.e., there exists some divisor $D$ such that $DL_1...L_{n-1}\ne 0$. 
Let $E$ be  a rank $r$ torsion free sheaf on $X$.  Then we define the \emph{slope of $E$ with respect to} $(L_1,..., L_{n-1})$ as
$$\mu _{L_1...L_{n-1}} (E)= \frac{c_1(E) L_1...L_{n-1}}{r}. $$
We define $\mu _{\max, L_1...L_{n-1}} (E)$ as the maximum of $\mu _{L_1...L_{n-1}} (F)$ for all subsheaves $F\subset E$. Similarly, we define $\mu _{\min, L_1...L_{n-1}} (E)$ as the minimum of $\mu _{L_1...L_{n-1}} (F)$ for all torsion free quotients $F$  of $ E$. These are well defined rational numbers. 
We say that $E$ is \emph{slope  $(L_1,..., L_{n-1})$-semistable} if 
$\mu _{L_1...L_{n-1}} (F)\le \mu _{L_1...L_{n-1}} (E)$ for all subsheaves $F\subset E$ of rank less than $r$.
Similarly, we define \emph{slope  $(L_1,..., L_{n-1})$-stable} sheaves using the strict inequality
$\mu _{L_1...L_{n-1}} (F)<\mu _{L_1...L_{n-1}} (E)$.

In case $H$ is an ample divisor, we define $\mu_H(E)$, $\mu_{\max , H} (E)$, $\mu_{\min ,H}(E)$ and slope $H$-(semi)stability using the collection of $(n-1)$-divisors $(H, ..., H)$.  

\medskip

If $E$ is a rank $r$ torsion free sheaf on $X$ we define the \emph{discriminant} of $E$ as $\Delta (E)=2rc_2(E)-(r-1)c_1^2(E)$. We say that \emph{Bogomolov's inequality holds for the collection of nef divisors} $(L_1,...,L_{n-1})$ if for every  slope  $(L_1,..., L_{n-1})$-semistable sheaf $E$ we have $\Delta (E)L_2...L_{n-1}\ge 0$. Note that this notion depends on the order of nef divisors in the collection.

\subsection{The Hodge index theorem}

The Hodge index theorem implies that if $D_1$ and $D_2$ are divisors on a smooth projective surface 
and $(a_1D_1+a_2D_2)^2>0$ for some $a_1,a_2\in \RR$ then $D_1^2\cdot D_2^2\le (D_1D_2)^2$.
Now if $X$ is a smooth projective variety of dimension $n\ge 2$ we see that if $H_1,..., H_{n-2}$ are ample divisors and $D_1, D_2$ are such that  $(a_1D_1+a_2D_2)^2H_1...H_{n-2}>0$ for some $a_1,a_2\in \RR$ then $D_1^2H_1...H_{n-2}\cdot D_2^2H_1...H_{n-2}\le (D_1D_2H_1...H_{n-2})^2$. If $H_1,...,H_{n-2}$
are only nef we can replace $H_i$ by $H_i+tH$ for some ample $H$ and positive $t$. Passing with $t$ to $0$
we get the same result as above assuming that $H_1,...,H_{n-2}$ are only nef.
This allows us to prove the following version of the Hodge index theorem.

\begin{lemma}\label{HIT}
	Let $(L_1,...,L_{n-1})$ be a collection of nef divisors such that the $1$-cycle $L_1...L_{n-1}$ is numerically nontrivial. Then for any ample divisor $H$ we have $HL_1...L_{n-1}>0$. Moreover, if $DL_1...L_{n-1}=0$
	for some divisor $D$ then $D^2L_2...L_{n-1}\le 0$.	
\end{lemma}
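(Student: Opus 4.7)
The plan is to prove the two assertions in sequence, using the first in the proof of the second. For the positivity $HL_1\ldots L_{n-1}>0$, I would take any divisor $D_0$ with $D_0 L_1\ldots L_{n-1}\neq 0$ (which exists by non-triviality of the $1$-cycle) and write $D_0 = A_1 - A_2$ with $A_1,A_2$ ample (e.g., $A_1 = D_0 + mH$ and $A_2 = mH$ for $m\gg 0$). Since the two quantities $A_i L_1\ldots L_{n-1}$ are non-negative as intersections of nef classes and their difference is non-zero, at least one of them, say $A_1 L_1\ldots L_{n-1}$, is strictly positive. By Kleiman's criterion $m'H - A_1$ is ample for $m'\gg 0$, so it pairs non-negatively with the nef $1$-cycle $L_1\ldots L_{n-1}$; this transfers the positivity to $H$: $HL_1\ldots L_{n-1}\geq A_1 L_1\ldots L_{n-1}/m' > 0$.

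For the Hodge-type estimate I would apply the $2$-divisor inequality from the paragraph just above the lemma to the pair $(D,L_1(\epsilon))$ with nef weights $L_2,\ldots,L_{n-1}$, where $L_1(\epsilon):=L_1+\epsilon H$ is ample for $\epsilon>0$. The required positivity $L_1(\epsilon)^2 L_2\ldots L_{n-1}>0$ follows from Part~1, since expanding gives
\[
L_1(\epsilon)^2 L_2\ldots L_{n-1} \;=\; L_1^2 L_2\ldots L_{n-1} + 2\epsilon HL_1 L_2\ldots L_{n-1} + \epsilon^2 H^2 L_2\ldots L_{n-1} \;\geq\; 2\epsilon HL_1 L_2\ldots L_{n-1} > 0.
\]
Using the hypothesis $DL_1\ldots L_{n-1}=0$, the Hodge inequality reduces to
\[
D^2 L_2\ldots L_{n-1}\cdot L_1(\epsilon)^2 L_2\ldots L_{n-1} \;\leq\; \epsilon^2 (DH L_2\ldots L_{n-1})^2.
\]
Dividing by $L_1(\epsilon)^2 L_2\ldots L_{n-1}$ and applying the lower bound above yields $D^2 L_2\ldots L_{n-1}\leq \epsilon (DH L_2\ldots L_{n-1})^2/(2HL_1 L_2\ldots L_{n-1})$, and letting $\epsilon\to 0^+$ gives $D^2 L_2\ldots L_{n-1}\leq 0$.

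The subtle point, and the reason the perturbation is needed, is the case $L_1^2 L_2\ldots L_{n-1}=0$: a direct application of the Hodge inequality to the pair $(D,L_1)$ degenerates to the trivial estimate $0\leq 0$. Shifting $L_1$ into the ample cone via $L_1(\epsilon)$ moves it into the positive locus of the quadratic form $(x,y)\mapsto xyL_2\ldots L_{n-1}$ at linear rate in $\epsilon$, while the perturbation of the vanishing $DL_1\ldots L_{n-1}$ produces only an $O(\epsilon)$ correction, so that the squared right-hand side is $O(\epsilon^2)$ and the ratio $O(\epsilon^2)/O(\epsilon)=O(\epsilon)$ tends to zero. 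The strict positivity $HL_1 L_2\ldots L_{n-1}>0$ supplied by Part~1 is precisely what guarantees the denominator decays only as $\epsilon$ and not faster; this is the crucial quantitative input that makes the limit work uniformly in both the non-degenerate and degenerate cases for $L_1$.
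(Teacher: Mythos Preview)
Your proof is correct and follows essentially the same route as the paper: for the first assertion the paper produces a divisor $M$ with $ML_1\ldots L_{n-1}>0$ and transfers positivity to $H$ via effectivity of $mH-M$ (you use amplitude of $m'H-A_1$ instead, which is equivalent), and for the second assertion both you and the paper perturb $L_1$ to $L_1+\epsilon H$, apply the Hodge inequality to the pair $(D,L_1+\epsilon H)$, and let $\epsilon\to 0$. Your uniform lower bound $(L_1+\epsilon H)^2L_2\ldots L_{n-1}\ge 2\epsilon\, HL_1\ldots L_{n-1}$ handles both the non-degenerate and degenerate cases for $L_1^2L_2\ldots L_{n-1}$ at once, whereas the paper treats these as two separate cases; otherwise the arguments coincide.
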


\begin{proof}
	If $L_1...L_{n-1}$ is numerically nontrivial then there exists some divisor $M$ such that $ML_1...L_{n-1}> 0$.
	But then there exists some $m>0$ such that  $H^0(X, \cO_{X}(mH-M ) )\ne 0$. Therefore $mH L_1...L_{n-1}\ge ML_1...L_{n-1}> 0$, which proves the first claim. To prove the second claim let us take $D$ such that  $DL_1...L_{n-1}=0$.	The above version of the Hodge index theorem implies that for any $t>0$ and any ample $H$ we have
	$$D^2L_2...L_{n-1}\cdot (L_1+tH)^2L_2...L_{n-1} \le (D(L_1+tH)L_2...L_{n-1} )^2=t^2 (DHL_2...L_{n-1})^2. $$
	Passing with $t$ to $0$ we see that if $L_1^2L_2...L_{n-1}>0$ then $D^2L_2...L_{n-1}\le 0$.
	If $L_1^2L_2...L_{n-1}=0$ then dividing the above inequality by $t$ and passing with $t$ to $0$
	again gives $D^2L_2...L_{n-1}\le 0$.
\end{proof}

\section{Boundedness of semistable sheaves}

\subsection{Change of polarization}

The following proposition has a similar proof as \cite[Proposition 6.2]{La3}. Since it is crucial 
for the following arguments, we give all details of its proof for the convenience of the reader.

\begin{proposition}\label{pol-change}
	Let $X$ be a smooth projective variety of dimension $n$ and let $(L_1,...,L_{n-1})$ be a collection of nef divisors such that the $1$-cycle $L_1...L_{n-1}$ is numerically nontrivial. If Bogomolov's inequality holds for $(L_1,...,L_{n-1})$ then it holds for any  $(M,L_2,...,L_{n-1})$ such that $M$ is nef and  $ML_2...L_{n-1}$ is numerically nontrivial. 
\end{proposition}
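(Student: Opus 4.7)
Proof plan:

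The plan is to apply the discriminant identity for filtrations to the Harder--Narasimhan filtration of $E$ with respect to $(L_1,L_2,\ldots,L_{n-1})$, and to bound the resulting cross-terms via the Hodge index theorem (Lemma \ref{HIT}) applied to a nef interpolation between $L_1$ and $M$.

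First let $E$ be $(M,L_2,\ldots,L_{n-1})$-semistable and let $0=F_0\subset F_1\subset\cdots\subset F_m=E$ be its HN filtration with respect to $(L_1,L_2,\ldots,L_{n-1})$-semistability, with $\gr_i=F_i/F_{i-1}$ of rank $r_i$, $R_i=\rk F_i$, $\xi_i=c_1(\gr_i)/r_i$, $r=\rk E$. Each $\gr_i$ is $(L_1,\ldots,L_{n-1})$-semistable, so by hypothesis $\Delta(\gr_i)L_2\cdots L_{n-1}\ge 0$. The standard discriminant identity for a filtration,
$$\frac{\Delta(E)L_2\cdots L_{n-1}}{r}=\sum_{i=1}^m \frac{\Delta(\gr_i)L_2\cdots L_{n-1}}{r_i}-\frac{1}{r}\sum_{i<j}r_i r_j (\xi_i-\xi_j)^2 L_2\cdots L_{n-1},$$
then reduces the claim to showing that the cross sum $\sum_{i<j}r_i r_j (\xi_i-\xi_j)^2 L_2\cdots L_{n-1}$ is $\le 0$.

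The crucial mechanism is a Hodge-index bound at each intermediate subsheaf $F_i$. On one hand the HN property gives $(\xi_{F_i}-\xi_{E/F_i})L_1 L_2\cdots L_{n-1}>0$; on the other, the $(M,L_2,\ldots,L_{n-1})$-semistability of $E$ tested against $F_i$ yields $\mu_M(F_i)\le \mu_M(E)$, which rearranges (using that $\mu_M(E)$ is the weighted mean of $\mu_M(F_i)$ and $\mu_M(E/F_i)$) to $(\xi_{F_i}-\xi_{E/F_i}) M L_2\cdots L_{n-1}\le 0$. Setting $L^t:=(1-t)L_1+tM$, which is nef for $t\in[0,1]$, the intermediate value theorem produces $t_i\in(0,1]$ with $(\xi_{F_i}-\xi_{E/F_i}) L^{t_i} L_2\cdots L_{n-1}=0$; since $L^{t_i}L_2\cdots L_{n-1}$ is numerically nontrivial (as both $L_1 L_2\cdots L_{n-1}$ and $M L_2\cdots L_{n-1}$ are), Lemma \ref{HIT} applied to the nef collection $(L^{t_i},L_2,\ldots,L_{n-1})$ delivers
$$(\xi_{F_i}-\xi_{E/F_i})^2 L_2\cdots L_{n-1}\le 0 \quad \text{for every } i=1,\ldots,m-1.$$

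To convert these ``two-step'' Hodge bounds into the full cross-term inequality, I would induct on the HN length $m$. The case $m=1$ is the Bogomolov hypothesis itself. In the inductive step, apply the two-step version of the discriminant identity to $F_1\subset E$: the third term is non-negative by the Hodge bound just established, $\Delta(F_1)L_2\cdots L_{n-1}=\Delta(\gr_1)L_2\cdots L_{n-1}\ge 0$ is the hypothesis, and $\Delta(E/F_1)L_2\cdots L_{n-1}$ is to be handled by the inductive hypothesis applied to $E/F_1$. The main obstacle I anticipate lies precisely in this last bookkeeping: $E/F_1$ is generally not $(M,L_2,\ldots,L_{n-1})$-semistable, so the inductive statement cannot be the proposition as stated but must be phrased in a form preserved under passage to $E/F_1$---for instance in terms of the intrinsic condition that the two-step Hodge bound of the previous paragraph holds at every step of the $(L_1,\ldots,L_{n-1})$-HN filtration of the sheaf under consideration---and one must verify that this weaker condition is inherited from the ambient $M$-semistability of $E$. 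This is the delicate combinatorial step, and is the form in which the argument is carried out in \cite[Proposition 6.2]{La3}.
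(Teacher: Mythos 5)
Your opening steps are sound: the discriminant identity for the $L_1$-HN filtration, and the two-step Hodge bounds $(\xi_{F_i}-\xi_{E/F_i})^2L_2\cdots L_{n-1}\le 0$ obtained by interpolating $L^t=(1-t)L_1+tM$ and applying Lemma \ref{HIT} at the zero of $t\mapsto(\xi_{F_i}-\xi_{E/F_i})L^tL_2\cdots L_{n-1}$, are all correct. But the proof has a genuine gap exactly where you flag it, and the step does not go through as proposed. The two-step bounds control only the differences $\xi_{F_i}-\xi_{E/F_i}$, not the pairwise differences $\xi_i-\xi_j$ (for individual pairs you have no information about $(\xi_i-\xi_j)ML_2\cdots L_{n-1}$, so no interpolation argument applies), and it is not even clear that the total cross sum is nonpositive. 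The induction on HN length that you propose to bridge this requires the two-step condition to be inherited by $E/F_1$, and it is not: writing $\mu_M$ for $\mu_{ML_2\cdots L_{n-1}}$, the inherited condition would be $\mu_M(F_j/F_1)\le\mu_M(E/F_1)$ for all $j$, which does not follow from $\mu_M(F_j)\le\mu_M(E)$; for instance, if $\mu_M(F_j)=\mu_M(E)$ and $\mu_M(F_1)<\mu_M(E)$ then $\mu_M(F_j/F_1)-\mu_M(E/F_1)=R_1\bigl(\mu_M(E)-\mu_M(F_1)\bigr)\bigl(\tfrac{1}{R_j-R_1}-\tfrac{1}{r-R_1}\bigr)>0$ for $j<m$. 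Deferring the missing step to \cite[Proposition 6.2]{La3} does not close the gap: that argument is not an induction on the HN length of a fixed sheaf.

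The paper (following \cite{La3}) avoids the cross-term bookkeeping altogether by inducting on the \emph{rank} of $E$, with the induction hypothesis quantified over \emph{all} nef divisors $N$ in the first slot with $NL_2\cdots L_{n-1}$ numerically nontrivial, not just the fixed $M$. If $E$ is not $L_1\cdots L_{n-1}$-semistable, one tilts the polarization along $M_t=(1-t)M+tL_1$ to a critical value $t_0$ (Lemma \ref{change-of-polarization}) at which there is a short exact sequence $0\to E'\to E\to E''\to 0$ with $E'$, $E''$ of smaller rank, slope $M_{t_0}L_2\cdots L_{n-1}$-semistable, and of the same $M_{t_0}L_2\cdots L_{n-1}$-slope as $E$. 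The rank induction (applied with the nef divisor $M_{t_0}$ in the first slot) gives $\Delta(E')L_2\cdots L_{n-1}\ge 0$ and $\Delta(E'')L_2\cdots L_{n-1}\ge 0$, and the single cross-term $\bigl(c_1(E')/r'-c_1(E'')/r''\bigr)^2L_2\cdots L_{n-1}$ is $\le 0$ by Lemma \ref{HIT} because the corresponding class has zero intersection with $M_{t_0}L_2\cdots L_{n-1}$. The key device your plan lacks is this re-use of semistability at the intermediate polarization $M_{t_0}$, combined with the stronger induction hypothesis: it lets one apply a Bogomolov bound to the two pieces of a filtration whose unique cross-term is controlled, instead of to the $L_1$-HN graded pieces, whose pairwise cross-terms cannot be controlled by the available slope inequalities.
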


\begin{proof}
	The proof is by induction on the rank of $E$ with rank $1$ being left to the reader. Let us assume that Bogomolov's inequality holds for  all sheaves of rank less than $r$ which are slope semistable with respect to some $ML_2...L_{n-1}$, where $M$ is  nef and  $ML_2...L_{n-1}$ is numerically nontrivial. Let us now fix some nef $M$ such that 
	$ML_2...L_{n-1}$ is numerically nontrivial and let $E$ be a slope $ML_2...L_{n-1}$-semistable torsion free sheaf of rank $r$. 	
	If $E$ is slope $L_1L_2...L_{n-1}$-semistable then 
	$\Delta (E)L_2...L_{n-1}\ge 0$ by the assumption that Bogomolov's inequality holds for $L_1...L_{n-1}$. So we can assume that  $E$ is not slope $L_1L_2...L_{n-1}$-semistable. In this case we consider $M_t=(1-t)M+tL_1$ for $t\in [0,1]$. 
	Let us note that $M_{t}L_2...L_{n-1}$ is numerically nontrivial, as otherwise for any ample $H$ we get $(1-t)HML_2...L_{n-1}+tHL_1...L_{n-1}=0$, which contradicts the first part of Lemma \ref{HIT}. Now we have the following lemma.
	
	\begin{lemma}\label{change-of-polarization}
		There exist some rational $t_0\in [0, 1)$ and slope $M_{t_0}L_2...L_{n-1}$-semistable torsion free sheaves $E'$ and $E''$ of ranks $r'$, $r''$ less than $r$, such that the sequence
		$$0\to E'\to E\to E''\to 0$$
		is exact and $\mu _{M_{t_0}L_2...L_{n-1}}(E')=\mu _{M_{t_0}L_2...L_{n-1}}(E'')=\mu _{M_{t_0}L_2...L_{n-1}}(E)$.
	\end{lemma}
	
	\begin{proof}
		Let $\cS$ be the set of all saturated subsheaves  $F\subset E$ of rank less than $r$ such that $\mu _{L_1L_2...L_{n-1}}(F)>\mu _{L_1L_2...L_{n-1}}(E)$. Note that  for any $F\in \cS$ we have $r! \mu _{L_1L_2...L_{n-1}}(F)\in \ZZ$ and $\mu _{L_1L_2...L_{n-1}}(F)\le \mu _{\max, L_1L_2...L_{n-1}}(E)$, so  the set $\{\mu _{L_1L_2...L_{n-1}}(F): F\in \cS\}$ 
		is finite. Let us take $E'\in \cS$ such  that the quotient
		$$s(F):= \frac{\mu _{ML_2...L_{n-1}}(E)-\mu _{ML_2...L_{n-1}}(F)} {\mu _{L_1L_2...L_{n-1}}(F)-\mu _{L_1L_2...L_{n-1}}(E)}$$
		attains the minimum among all $F\in \cS$. Such $E'$ exists since 
		$r!(\mu _{ML_2...L_{n-1}}(E)-\mu _{ML_2...L_{n-1}}(F))$ is a non-negative integer 
		(because by assumption $E$ is slope $ML_2...L_{n-1}$-semistable) and the denominator  takes only a finite number of positive values. 
		Let us set  $t_0=\frac{s(E')}{1+s(E')}$ so that $s(E')=t_0/(1-t_0)$.
		For any $F\subset E$  of rank less than $r$ we have 
		{ \small
			$$ \mu _{M_{t_0}L_2...L_{n-1}}(E)-\mu _{M_{t_0}L_2...L_{n-1}}(F)=(1-t_0) (\mu _{ML_2...L_{n-1}}(E)-\mu _{ML_2...L_{n-1}}(F))
			-t_0 (\mu _{L_1L_2...L_{n-1}}(F)-\mu _{L_1L_2...L_{n-1}}(E)).
			$$}
		This difference is clearly non-negative if $F\not \in \cS$ and $\ge 0$ if $F\in \cS$ with equality for $F=E'$.
		Therefore $E'$ and $E''=E/E'$ satisfy the required assertions.
	\end{proof}
	
	Now to finish proof of the proposition note that by the induction assumption  we have $\Delta (E') L_2...L_{n-1}\ge 0$ and $\Delta (E'') L_2...L_{n-1}\ge 0$.
	Therefore by the Hodge index theorem (see the second part of Lemma \ref{HIT}) we get
	$$\frac{\Delta (E) L_2...L_{n-1}}{r}=\frac{\Delta (E') L_2...L_{n-1}}{r'}+\frac{\Delta (E'') L_2...L_{n-1}}{r''}-\frac{r'r''}{r}\left(\frac{c_1(E')}{r'}-\frac{c_1(E'')}{r''}
	\right) ^2L_2...L_{n-1}\ge 0.$$
\end{proof}

\begin{remark}
	Lemma \ref{change-of-polarization} is rather standard (see, e.g., \cite[Lemma 4.C.5]{HL}) but we give all the details as the proof in \cite{HL} uses Grothendieck's lemma that fails in our case (it fails even in the surface case when both $L_1$ and $M$ are not ample).
\end{remark}

\subsection{Bogomolov's inequality and restriction theorem on projective spaces} 

\begin{theorem}\label{Bogomolov-proj-sp}
	Let $H$ be a hyperplane on $\PP ^n$ and let $E$ be a slope $H$-semistable torsion free coherent sheaf on $\PP ^n$. Then $\Delta (E)H^{n-2}\ge 0.$
\end{theorem}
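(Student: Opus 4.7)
The plan is to argue by induction on $n$. The base case $n=2$ is the classical Bogomolov inequality on $\mathbb{P}^2$; I would prove it directly, for instance via Gieseker's argument estimating $h^0$ of the symmetric powers of $E$ through Riemann--Roch and comparing against a slope-based upper bound, which gives a contradiction whenever $\Delta(E) < 0$.

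For the inductive step, assuming the result on $\mathbb{P}^{n-1}$, I would fix a linear subspace $V\cong \mathbb{P}^{n-2}\subset \mathbb{P}^n$ and pass to the blow-up $\pi\colon \tilde X\to \mathbb{P}^n$ along $V$, which resolves the pencil projection and comes with a morphism $f\colon \tilde X\to \mathbb{P}^1$ whose fibres $Y_t$ are strict transforms of hyperplanes through $V$, each isomorphic to $\mathbb{P}^{n-1}$. Setting $\tilde E=\pi^*E$, the projection formula gives
$$\Delta(\tilde E)(\pi^*H)^{n-2}=\Delta(E)H^{n-2},$$
so it is enough to prove non-negativity of the left hand side. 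Since $\tilde E$ is slope semistable with respect to the pulled-back nef class $\pi^*H$, I would apply Proposition~\ref{pol-change} with $L_2=\cdots=L_{n-1}=\pi^*H$ to reduce the problem to Bogomolov's inequality for sheaves that are semistable with respect to $(L_1,\pi^*H,\ldots,\pi^*H)$ for some convenient nef $L_1$ on $\tilde X$. The natural choice is $L_1=f^*\mathcal{O}_{\mathbb{P}^1}(1)$: the slope with respect to this polarisation equals the $H'$-slope of the restriction to a general fibre $Y_t$, so semistability with respect to this polarisation is equivalent to the $H'$-semistability of the restriction to a general fibre, and the inductive hypothesis then yields $\Delta(\mathcal F|_{Y_t})(H')^{n-3}\geq 0$.

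Translating this fibrewise inequality back to $\tilde X$ gives $\Delta(\mathcal F)\,f^*\mathcal{O}(1)\,(\pi^*H)^{n-3}\geq 0$, and the hardest step will be bridging this with the desired $\Delta(\mathcal F)(\pi^*H)^{n-2}\geq 0$. Writing $\pi^*H=f^*\mathcal{O}(1)+E_{\mathrm{exc}}$, the difference is $\Delta(\mathcal F)\,E_{\mathrm{exc}}\,(\pi^*H)^{n-3}$, a Bogomolov-type contribution supported on the exceptional divisor $E_{\mathrm{exc}}\cong \mathbb{P}^1\times\mathbb{P}^{n-2}$. Closing this gap is the crux of the argument: I would attempt it either by replacing the auxiliary model with one whose exceptional contribution to the relevant intersection number vanishes, by a secondary induction exploiting the fibration structure on $E_{\mathrm{exc}}$, or by iterating Proposition~\ref{pol-change} with a nef class $L_1$ tailored so that the exceptional term is absorbed. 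Once this is resolved, the whole argument reduces Bogomolov on $\mathbb{P}^n$ to its analogue on $\mathbb{P}^{n-1}$ purely through changes of polarisation on $\tilde X$, with no appeal to any restriction theorem.
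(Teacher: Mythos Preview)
Your architecture is exactly the paper's: induction on $n$ with the $n=2$ case handled by Riemann--Roch on $\mathbb{P}^2$ (the paper uses $\chi(\cEnd E)$ rather than symmetric powers, but this is immaterial), then for $n\ge 3$ blow up the base locus of a general pencil of hyperplanes to get $q\colon Y\to\mathbb{P}^n$ and $p\colon Y\to\mathbb{P}^1$, verify Bogomolov's inequality for the collection $(p^*\cO_\Lambda(1),q^*H,\ldots,q^*H)$ on $Y$ via the inductive hypothesis on a general fibre $Y_s\cong\mathbb{P}^{n-1}$, and finally invoke Proposition~\ref{pol-change} to replace the first divisor by $q^*H$ and apply the result to $q^*E$.

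The difference is that the paper does not regard your ``bridging step'' as a difficulty at all: it simply asserts the equality $\Delta(F)\,q^*(H^{n-2})=\Delta(F_s)\,H_s^{n-3}$ for an arbitrary torsion-free $F$ on $Y$ and a general fibre $Y_s$, and then the inductive hypothesis on $\mathbb{P}^{n-1}$ finishes the verification of Bogomolov for $(p^*\cO_\Lambda(1),q^*H,\ldots,q^*H)$ in one line. There is no decomposition $\pi^*H=f^*\cO(1)+E_{\mathrm{exc}}$, no secondary induction on the exceptional divisor, and no iteration of Proposition~\ref{pol-change}.

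Your proposal, by contrast, explicitly leaves this step open and offers only sketches of possible attacks. None of the three directions you list is developed into an argument, and at least one of them cannot work as stated: Proposition~\ref{pol-change} allows you to vary only the first divisor $L_1$ while the product $L_2\cdots L_{n-1}$ appearing in the inequality stays fixed, so no amount of iteration will convert $f^*\cO(1)\cdot(\pi^*H)^{n-3}$ into $(\pi^*H)^{n-2}$ on that side. Since you yourself identify this as the crux and do not close it, the proposal as written is incomplete precisely at the point where the paper's proof makes its key (unelaborated) claim.
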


\begin{proof}
	The proof is by induction on the dimension $n$ starting with $n=2$. In this case the proof follows by standard arguments involving the Riemann--Roch theorem. More precisely, we prove the inequality by induction on the rank $r$ of $E$. If $E$ is slope $H$-stable then $h^0(\cEnd E)=1$ and $h^2(\cEnd E)=h^0(\cEnd E(-3))=0$, so $\chi (\cEnd E)= -\Delta (E)+r^2\chi (\cO_{\PP ^2})\le 1$, i.e., $\Delta (E)\ge r^2-1\ge 0$.
	If $E$ is  slope $H$-semistable but not slope $H$-stable then there exists slope $H$-semistable torsion free
	sheaves  $E'$ and $E''$ of ranks $r'$, $r''$ less than $r$, such that the sequence
	$$0\to E'\to E\to E''\to 0$$
	is exact and  $\mu _{H}(E')=\mu _{H}(E'')=\mu _{H}(E)$.
	Then by the Hodge index theorem (or using the fact that $\Pic \PP^2$ is generated by $H$) and the induction assumption we get
$$\frac{\Delta (E)}{r}=\frac{\Delta (E')}{r'}+\frac{\Delta (E'') }{r''}-\frac{r'r''}{r}\left(\frac{c_1(E')}{r'}-\frac{c_1(E'')}{r''}
\right) ^2\ge 0,$$	
which finishes the proof.

	Now let us assume that $n\ge 3$.
	Let $E$ be a slope $H$-semistable torsion free coherent sheaf on $\PP ^n$. 
	Let $\Lambda \subset |\cO_{\PP ^n} (1)|$ be a general pencil of hyperplanes. Let $q: Y\to \PP^n$
	be the blow up of $\PP ^n$ in the base locus of $\Lambda$ and let $p: Y\to \Lambda=\PP^1$ be the canonical 
	projection.
	
	We claim that Bogomolov's inequality holds for the collection $(p^*\cO _{\Lambda }(1), q^*(H)^{n-2})$. 
	Namely, let $F$ be a torsion free sheaf on $Y$, which  is slope $p^*\cO _{\Lambda }(1)q^*(H^{n-2})$-semistable. 	
Existence of the flattening stratification (see \cite[Theorem 2.1.5]{HL}) implies that there exists a non-empty open subset $U\subset \PP^1$ such that $F$ is flat over $U$. Moreover, we can assume that for every $s\in U$ the restriction  $F_s$ to the fiber of $p$ over $s$ is torsion-free (cf.~\cite[Corollary 1.1.14 and Lemma 3.1.1]{HL}).  Since  $F$ is  slope $p^*\cO _{\Lambda }(1)q^*(H^{n-2})$-semistable, it is also slope $H^{n-2}_{\eta}$-semistable on the fiber $Y_{\eta}=\PP^{n-1}_{k(\eta)}$ of $p$ over the generic point $\eta\in \Lambda$ (if $F'\subset F_{Y_{\eta}}$
destabilizes $F_{Y_{\eta}}$ then we can extend it to a coherent subsheaf of $F$, which destabilizes $F$ with respect to $(p^*\cO _{\Lambda }(1), q^*(H)^{n-2})$). By openness of slope semistability for flat families
(cf. \cite[Propposition 2.3.1]{HL} and \cite[Chapter 1, Theorem 4.2]{Ma}), the set of $s\in U$ such that $F_s$ is slope semistable on the fiber $Y_s=\PP^{n-1} _{k(s)}$ is open. Since it contains the generic point $\eta$, it is non-empty and hence there exists a  closed (geometric) point $s$ of $U\subset \Lambda$ for which $F_s$ is slope semistable. Therefore by the induction assumption $\Delta (F)q^*(H^{n-2})=\Delta (F_s)H_s^{n-3}\ge 0$, where $H_s$ is a hyperplane in $Y_s$. This proves our claim.

 Now Proposition  \ref{pol-change} implies that Bogomolov's inequality holds also for $q^*(H)^{n-1}$. 
Since $E$ is torsion free, it is locally free outside of a closed subset of codimension $\ge 2$. So $E$ is locally free along the base locus of $\Lambda$ and thus $q^*E$ is torsion free. This implies that $q^*E$ is slope $q^*(H)^{n-1}$-semistable and hence $\Delta (E)H^{n-2} =\Delta (q^*E)(q^*H)^{n-2}\ge 0.$
\end{proof}

As in \cite[Theorem 5.1]{La1} the above theorem implies the following corollary:

\begin{corollary}\label{unstable-Bogomolov}
	Let $E$ be a torsion free rank $r$ sheaf on $\PP ^n$. 
	Then we have
	$$\Delta (E)H^{n-2}+ r^2(\mu_{\max , H}(E) -\mu _H (E))(\mu _H (E) -\mu_{\min , H} (E) )\ge 0.$$
\end{corollary}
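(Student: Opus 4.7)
The plan is to apply the Harder--Narasimhan filtration to reduce to the semistable case (where Theorem \ref{Bogomolov-proj-sp} applies), and then absorb the resulting slope discrepancies into the claimed correction term by a standard numerical inequality.

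Let $0 = E_0 \subset E_1 \subset \cdots \subset E_m = E$ be the Harder--Narasimhan filtration of $E$, with torsion-free slope $H$-semistable factors $\gr_i := E_i/E_{i-1}$ of rank $r_i$ and slope $\mu_i := \mu_H(\gr_i)$, ordered so that $\mu_1 > \cdots > \mu_m$; then $\mu_1 = \mu_{\max, H}(E)$ and $\mu_m = \mu_{\min, H}(E)$. By Theorem \ref{Bogomolov-proj-sp}, $\Delta(\gr_i) H^{n-2} \ge 0$ for each $i$. Iterating the two-step discriminant identity used at the end of the proof of Proposition \ref{pol-change} yields
$$\frac{\Delta(E) H^{n-2}}{r} = \sum_i \frac{\Delta(\gr_i)H^{n-2}}{r_i} - \sum_{i<j} \frac{r_i r_j}{r} \left(\frac{c_1(\gr_i)}{r_i} - \frac{c_1(\gr_j)}{r_j}\right)^{\!2} H^{n-2}.$$
Because $\Pic \PP^n = \ZZ \cdot H$ and $H^n = 1$, the last product equals $(\mu_i - \mu_j)^2$, and using $\Delta(\gr_i)H^{n-2}\ge 0$ I get
$$\Delta(E)H^{n-2} \ge -\sum_{i<j} r_i r_j (\mu_i - \mu_j)^2.$$

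Finally I would carry out the elementary manipulation
$$\sum_{i<j} r_i r_j (\mu_i - \mu_j)^2 = r \sum_i r_i (\mu_i - \mu)^2, \qquad \mu := \mu_H(E) = \tfrac{1}{r}\sum_i r_i \mu_i,$$
and bound the right-hand side using the fact that $\mu_m \le \mu_i \le \mu_1$ implies $(\mu_1 - \mu_i)(\mu_i - \mu_m) \ge 0$, i.e.\ $\mu_i^2 \le (\mu_1 + \mu_m)\mu_i - \mu_1 \mu_m$. Summing with weights $r_i$, then subtracting $r\mu^2$ from both sides, gives
$$\sum_i r_i (\mu_i - \mu)^2 \le r (\mu_{\max,H}(E) - \mu)(\mu - \mu_{\min, H}(E)).$$
Combining all of this yields $\Delta(E)H^{n-2} \ge -r^2(\mu_{\max,H}(E) - \mu)(\mu - \mu_{\min,H}(E))$, as claimed.

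Almost everything is routine: one just has to set up the HN filtration, invoke Theorem \ref{Bogomolov-proj-sp} on each factor, and apply the iterated discriminant formula. The only genuine calculation is the last numerical inequality, bounding $\sum r_i(\mu_i - \mu)^2$ by $r(\mu_{\max} - \mu)(\mu - \mu_{\min})$, but this is standard and follows immediately from the one-line quadratic bound above.
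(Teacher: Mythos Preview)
Your argument is correct and is essentially the standard proof that the paper invokes by citing \cite[Theorem 5.1]{La1}: take the Harder--Narasimhan filtration, apply Theorem \ref{Bogomolov-proj-sp} to each graded piece, expand $\Delta(E)$ via the iterated two-term identity, and bound the resulting slope terms by $r^2(\mu_{\max}-\mu)(\mu-\mu_{\min})$. Your use of $\Pic\PP^n=\ZZ\cdot H$ to turn $\bigl(c_1(\gr_i)/r_i-c_1(\gr_j)/r_j\bigr)^2H^{n-2}$ into $(\mu_i-\mu_j)^2$ is the only place the argument is specific to $\PP^n$; on a general variety one would instead invoke the Hodge index theorem at this step, which is exactly how \cite{La1} proceeds.
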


\begin{proof}
	Let $0=F_0\subset F_1\subset \dots \subset F_m=E$ be the
	Harder--Narasimhan filtration of $E$ and let us set $F^i=F_i/F_{i-1}$, $r_i=\rk F^i$, $\mu _i =\mu _H(F^i)$.  Then by Theorem \ref{Bogomolov-proj-sp} we have
	$$\aligned
	\frac{\Delta (E)H^{n-2}}{ r}&= \sum \frac{\Delta
		(F^i)H^{n-2}}{ r_i} -\frac{1}{r}\sum_{i<j} r_ir_j \left(
	\frac{c_1F^i}{r_i}-\frac{c_1F^j}{ r_j}\right) ^2
	H^{n-2}\\
	&= \sum \frac{\Delta (F^i)H^{n-2}}{ r_i}-
	\frac{1}{ r } \sum_{i<j}r_ir_j(\mu_i-\mu_j)^2\ge -
	\frac{1}{ r } \sum_{i<j}r_ir_j(\mu_i-\mu_j)^2.\\
	\endaligned$$
	So the required inequality follows from \cite[Lemma 1.4]{La1}.
\end{proof}

As in \cite[Corollary 5.4]{La1} the above corollary implies the following restriction theorem (we state only a simplified version for slope semistability, but more precise versions need to be used in its proof).

\begin{theorem}\label{restriction}
	Let $E$ be a torsion free rank $r$ sheaf on $\PP ^n$, $n\ge 2$. Let $D\in |mH|$ be a general hypersurface of degree
	$$m>\frac {(r-1)^2\Delta (E) H^{n-2} +1}{r(r-1)} .$$
	If $E$ is slope  $H$-semistable then the restriction $E_D$ is slope $H_D$-semistable.
\end{theorem}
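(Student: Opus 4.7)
The plan is to argue by contradiction in the spirit of \cite[Corollary 5.4]{La1}. Suppose that $E$ is slope $H$-semistable on $\PP^n$ while, for a general $D \in |mH|$, the restriction $E_D$ fails to be slope $H_D$-semistable, and extract an upper bound on $m$ that contradicts the hypothesis.

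First, I quantify the destabilization on $D$. Pick a saturated subsheaf $F \subset E_D$ of rank $s$ with $1 \leq s \leq r - 1$ and $\mu_{H_D}(F) > \mu_{H_D}(E_D)$. Since the quantity $r \cdot c_1(F) H_D^{n-2} - s \cdot c_1(E_D) H_D^{n-2}$ is a strictly positive integer, the gap $\delta := \mu_{H_D}(F) - \mu_{H_D}(E_D)$ satisfies $\delta \geq 1/(sr) \geq 1/(r(r-1))$; a symmetric computation gives $\mu_{H_D}(E_D) - \mu_{H_D}(E_D/F) \geq s\delta/(r-s) \geq 1/(r(r-1))$.

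Next, I lift $F$ to $\PP^n$ by forming the elementary modification $G := \ker(E \twoheadrightarrow E_D \twoheadrightarrow E_D/F)$, which is a torsion-free rank-$r$ subsheaf of $E$ fitting into $0 \to E(-mH) \to G \to i_*F \to 0$. A direct Chern class computation, using Grothendieck--Riemann--Roch for the torsion sheaf $i_*(E_D/F)$ on $\PP^n$, gives $c_1(G) = c_1(E) - (r-s) mH$ and
\[
\Delta(G) \cdot H^{n-2} = \Delta(E) \cdot H^{n-2} - 2rs\delta - s(r-s) m^2.
\]
The inclusion $G \subset E$ together with the $\mu_H$-semistability of $E$ forces $\mu_{\max, H}(G) - \mu_H(G) \leq (r-s) m/r$, while the embedding $E(-mH) \hookrightarrow G$ with torsion cokernel $i_*F$, combined with the $\mu_H$-semistability of $E(-mH)$, yields $\mu_H(G) - \mu_{\min, H}(G) \leq s m/r$. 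Substituting these estimates and the formula above into Corollary \ref{unstable-Bogomolov} applied to $G$ gives the desired upper bound on $m$, contradicting $m > [(r-1)^2 \Delta(E) H^{n-2} + 1]/(r(r-1))$ after optimizing the choice of $s$.

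The main obstacle is obtaining the precise numerical bound: a naive combination of the estimates above results in the $-s(r-s)m^2$ term in $\Delta(G) H^{n-2}$ being exactly cancelled by the term $r^2 \cdot (\mu_{\max, H}(G) - \mu_H(G))(\mu_H(G) - \mu_{\min, H}(G)) \leq s(r-s) m^2$ coming from Corollary \ref{unstable-Bogomolov}, leaving only the uniform bound $\Delta(E) H^{n-2} \geq 2rs\delta$. The required refinement exploits the constraint that any subsheaf of $G$ achieving the maximal slope $\mu_H(E)$ must restrict inside $F$ on $D$, which bounds its rank and sharpens the estimate on $\mu_{\max, H}(G) - \mu_H(G)$; alternatively, a geometric argument on the blow-up of $\PP^n$ along the base locus of a pencil in $|mH|$, combined with the polarization-change mechanism of Proposition \ref{pol-change}, delivers the linear-in-$m$ dependence required in the hypothesis.
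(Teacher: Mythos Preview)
Your setup is exactly the approach the paper invokes via \cite[Corollary~5.4]{La1}: form the elementary modification $G=\ker(E\to E_D/F)$, compute $\Delta(G)H^{n-2}=\Delta(E)H^{n-2}-s(r-s)m^2-2rs\delta$, and feed this into Corollary~\ref{unstable-Bogomolov}. The discriminant formula and the two slope bounds you state are correct. The problem is that your last paragraph is not a proof but an acknowledgement that the argument is unfinished: with the bounds $\mu_{\max,H}(G)-\mu_H(G)\le (r-s)m/r$ and $\mu_H(G)-\mu_{\min,H}(G)\le sm/r$ the $m^2$ terms cancel exactly, and you then only gesture at two possible repairs without executing either.

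Neither repair, as written, closes the gap. The observation that a subsheaf $G'\subset G$ with $\mu_H(G')=\mu_H(E)$ has image in $E_D$ contained in $F$ (hence $\rk G'\le s$) is correct, but a rank bound on $G'$ does not by itself improve the estimate on $\mu_{\max,H}(G)-\mu_H(G)$; to exploit it you must replace the crude product form of Corollary~\ref{unstable-Bogomolov} by the finer Harder--Narasimhan inequality that tracks the ranks of the individual factors, and then optimize. The argument in \cite{La1} in fact proceeds differently: one first proves the \emph{stable} case (\cite[Theorem~5.2]{La1}), where strict stability of $E$ forces any proper saturated $G'\subset G\subset E$ to satisfy $\mu_H(G')<\mu_H(E)$ by an explicit margin, which is precisely the missing linear-in-$m$ term; the semistable statement (\cite[Corollary~5.4]{La1}) is then deduced by applying the stable case to the Jordan--H\"older factors of $E$ together with the inequality $\Delta(E_i)H^{n-2}/r_i\le \Delta(E)H^{n-2}/r$. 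Your alternative via a pencil blow-up and Proposition~\ref{pol-change} is too vague to evaluate and is not the route taken in \cite{La1}.
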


\begin{proof}
First, one proves a restriction theorem for slope stability following the proof of \cite[Theorem 5.2]{La1}
with $\beta_r=0$. Then one uses this theorem to factors of the Jordan--H\"older filtration of $E$ as in the proof of \cite[Corollary 5.4]{La1}. Finally, one uses the fact that a restriction of a torsion free sheaf to a general hypersurface of fixed degree is still torsion free (see \cite[Lemma 1.1.12 and Corollary 1.1.14]{HL}).
\end{proof}

\medskip

The above restriction theorem is sufficient for our proof of the boundedness result. However, we prefer to 
use similar methods to prove the following new restriction theorem that is slightly better suited for our proof
of Theorem \ref{boundedness}. 
To simplify exposition we omit polarization in the notation of slopes.

\begin{theorem}\label{restriction2}
	Let $E$ be a torsion free rank $r$ sheaf on $\PP ^n$, $n\ge 2$. Then for  a general hyperplane $D\in |H|$
	we have 
	$$\mu_{\max} (E_D)-\mu_{\max} (E)\le \frac{\Delta (E)H^{n-2}}{2r}+ \frac{r}{2}(\mu_{\max }(E) -\mu  (E))(\mu  (E) -\mu_{\min } (E) ).$$
\end{theorem}

\begin{proof} First let us consider the case when $E$ is slope $H$-stable.

\begin{lemma}\label{restriction3}
	If $E$ is slope $H$-stable and $r\ge 2$ then for any hyperplane $D\in |H|$
such that $E_D$ is torsion free,	we have 
	$$\mu_{\max} (E_D)-\mu (E)\le \frac{\Delta (E)H^{n-2}-1}{2r}.$$
\end{lemma}

\begin{proof}
Let $S\subset E_D$ be a saturated subsheaf of rank $\rho \ge 1$
and let $T=E_D/S$. Let  $E'$ be the kernel of the composition $E\to E_D\to T$.
Then we have
$$\aligned
\Delta (E')H^{n-2} &=\Delta (E)H^{n-2}-\rho
(r-\rho)  + 2r(r-\rho) (\mu (T)-\mu(E))\\
&=\Delta (E)H^{n-2}-\rho (r-\rho)  - 2r\rho (\mu (S)-\mu(E)).\\
\endaligned
$$
Since $E'\subset E$ and $E$ is
slope $H$-stable we have
$$
\mu _{\max} (E')-\mu (E')=\frac{r-\rho}{r}+\mu _{\max} (E')-\mu (E)
\le \frac{r-\rho}{r}
-\frac{1}{r(r-1)} .
$$ Similarly, since $E(-H)\subset E'$ we have
$$
\mu (E')-\mu _{\min} (E')=\frac{\rho}{r}+\mu (E(-H))- \mu _{\min} (E')
\le \frac{\rho}{ r}-\frac{1}{r(r-1)} .$$ 
Hence by Corollary
\ref{unstable-Bogomolov} we obtain
$$\aligned
0&\le \Delta (E')H^{n-2} + r^2(\mu_{\max }(E') -\mu  (E'))(\mu  (E') -\mu_{\min } (E') )\\
&\le  \Delta (E)H^{n-2}-\rho (r-\rho)- 2r\rho (\mu (S)-\mu(E)) +
\left(\rho  -\frac{1}{r-1}\right)
\left(r-\rho -\frac{1}{r-1}\right)\\
&= \Delta (E)H^{n-2}-\frac{r}{r-1} +\frac{1}{(r-1)^2}- 2r\rho (\mu (S)-\mu(E))\\
&\le \Delta (E)H^{n-2}-1 - 2r\rho (\mu (S)-\mu(E)).\\
\endaligned
$$
In particular, if $\mu (S)\ge \mu(E)$ then 
$$ \mu (S)-\mu(E) \le \frac{\Delta (E)H^{n-2}-1}{2r}.$$
If $E_D$ is not slope $H_D$-semistable then  taking for $S$ the maximal destabilizing subsheaf of $E_D$
 we get
$$\mu_{\max} (E_D)-\mu (E)\le \frac{\Delta (E)H^{n-2}-1}{2r}.$$
If $E_D$ is slope $H_D$-semistable but not slope $H_D$-stable then taking for $S$ any slope $H_D$-stable subsheaf of $E_D$ of the same slope, we get $\Delta (E)H^{n-2} \ge 1$, which gives the required inequality.
If $E_D$ is slope $H_D$-stable then $n>2$ (for $n=2$ the restriction $E_D$ is a direct sum of 
line bundles, so it is not stable). Then $\Delta (E)H^{n-2}=\Delta (E_D)H^{n-3}\ge 1$ follows from the previous cases by induction on the dimension $n$.
\end{proof}

Now we can deal with slope $H$-semistable sheaves.

\begin{lemma}\label{restriction4}
	If $E$ is slope $H$-semistable then for a general hyperplane $D\in |H|$ we have 
	$$\mu_{\max} (E_D)-\mu (E)\le \frac{\Delta (E)H^{n-2}}{2r}.$$
\end{lemma}

\begin{proof}
	Let $0=F_0\subset F_1\subset \dots \subset F_m=E$ be a Jordan--H\"older filtration of $E$ and let us set $F^i=F_i/F_{i-1}$ and $r_i=\rk F^i$.  Then for any $j$ Theorem \ref{Bogomolov-proj-sp} implies that
$$
\frac{\Delta (E)H^{n-2}}{ r}= \sum \frac{\Delta
	(F^i)H^{n-2}}{ r_i} -\frac{1}{r}\sum_{i<j} r_ir_j \left(
\frac{c_1F^i}{r_i}-\frac{c_1F^j}{ r_j}\right) ^2
H^{n-2}= \sum \frac{\Delta (F^i)H^{n-2}}{ r_i} \ge \frac{\Delta (F^j)H^{n-2}}{ r_j}.$$
Since by \cite[Lemma 1.1.12 and Corollary 1.1.14]{HL} a restriction of a torsion free sheaf to a general hyperplane $D\in |H|$ is still torsion free, Lemma \ref{restriction3} shows that 
	$$\mu_{\max} ((F^j)_D)-\mu (E)=\mu_{\max} ((F^j)_D)-\mu (F^j) \le \frac{\Delta (F_j)H^{n-2}}{2r_j}\le \frac{\Delta (E)H^{n-2}}{ 2r}.$$
Therefore we have
$$\mu_{\max} (E_D)\le \max _{j}\mu_{\max} ((F^j)_D)
\le  \mu (E)+ \frac{\Delta (E)H^{n-2}}{2r}.$$
\end{proof}

In general,  we consider  the Harder--Narasimhan filtration  $0=F_0\subset F_1\subset \dots \subset F_m=E$ of $E$. Let us set $F^i=F_i/F_{i-1}$ and $r_i=\rk F^i$. 
Then the proof of Corollary \ref{unstable-Bogomolov} shows that for any $j$ we have
$$\frac{\Delta (F_j)H^{n-2}}{r_j}\le \frac{\Delta (E)H^{n-2}}{ r}+ r(\mu_{\max }(E) -\mu (E))(\mu (E) -\mu_{\min} (E) ).$$
Since by  Lemma \ref{restriction3}
$$\mu_{\max} (E_D)\le \max _{j}\mu_{\max} ((F^j)_D)
\le \max _{j}\left( \mu (F^j)+ \frac{\Delta (F_j)H^{n-2}}{2r_j}\right) \le \mu _{\max} (E)+\max _{j}\frac{\Delta (F_j)H^{n-2}}{2r_j},$$
we get the required inequality.
\end{proof}

\begin{remark}
\begin{enumerate}
	\item 	Using \cite[Corollary 1.1.15]{HL} it is sufficient to assume in Theorems \ref{restriction}
	and \ref{restriction2} that $k$ is an infinite, not necessarily algebraically closed, field.
\item 
Note that a similar method as that of proof of Theorem \ref{restriction2} gives some inequalities on the slope of the maximal destabilizing subsheaf of the restriction $E_D$ for a general divisor $D\in |mH|$ for $m\ge 1$
on an arbitrary normal variety  (cf. \cite[Theorem 5.2, Corollary 5.4]{La1}). We leave the details of proof to the interested reader.
\end{enumerate}
\end{remark}

\subsection{Proof of boundedness of semistable sheaves}

Let $X _k$ be an $n$-dimensional projective scheme over 
an algebraically closed field $k$ and let $H=\cO _{X_k}(1)$ be an ample divisor on $X_k$.
If  $E$ is a coherent sheaf of pure dimension $d$ on $X_k$ then there exist integers $a_0(E),\dots ,a_d(E)$
and rational numbers $\alpha_0 (E), \dots ,\alpha_d(E)$  such that
$$\chi (X_k, E(m))=\sum _{i=0}^d a_i (E) {m+d-i \choose d-i}=\sum _{i=0}^d \alpha_i (E)\frac{m^i}{i!}.$$
One defines the \emph{generalized slope} of $E$ by $\hat\mu (E)=\frac{\alpha_{d-1}(E)}{\alpha_d (E)}=\frac{a_1(E)}{a_0(E)}+\frac{d+1}{2}$. It  is used
to define $\hat \mu _{\max} (E)$ in the same way as the usual slope is used to define $\mu_{\max}$, i.e., 
$\hat \mu _{\max} (E)$ is the maximum of $\hat \mu (F)$ for all subsheaves $F\subset E$.

\medskip

Let $f: X \to S$ be a projective morphism of noetherian schemes  of relative dimension $n$ and let $\cO _{X/S}(1)$ be an $f$-very ample line bundle on $X$. Let us consider the following families of sheaves.

\begin{enumerate}
	\item  Let $\cS _{X/S} (d; r,a_1,\dots ,a_d, \mu _{\max})$ be 
	the family of isomorphism classes of
	coherent sheaves on the fibres of $f$ such that $E$ on a geometric fibre $X_s$
	is a member of the family if $E$ is of pure dimension $d$,
	$\hat \mu _{\max} (E)\le \mu_{\max}$, $a_0(E)=r$, $a_1(E)=a_1$ and $a_i(E)\ge a_i$
	for $i\ge 2$.  
	\item  Let $\cS '_{X/S} (d; r,a_1,a_2, \mu _{\max})$ 
	be the family of isomorphism classes of
	coherent sheaves on the fibres of $f$ such that $E$ on a geometric fibre $X_s$
	is a member of the family if $E$ is of pure dimension $d$ and it satisfies Serre's condition $S_2$,
	$\hat\mu _{\max} (E)\le \mu_{\max}$, $a_0(E)=r$, $a_1(E)=a_1$ and $a_2(E)\ge a_2$.  
\end{enumerate}

The following theorem was first proven in \cite[Theorem 4.4]{La1}. Here we sketch a simple proof of this theorem based on  the results of the previous subsection.

\begin{theorem} \label{boundedness}
	The families $\cS _{X/S} (d; r,a_1,\dots ,a_d, \mu _{\max})$
	and $\cS '_{X/S} (d; r,a_1,a_2, \mu _{\max})$  are bounded. 
\end{theorem}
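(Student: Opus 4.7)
My plan is to invoke Kleiman's boundedness criterion: it is enough to produce, for one (hence any) sufficiently large integer $m$, a uniform upper bound on $h^0(E(m))$ for all $E$ in the family. Upper bounds on the Hilbert coefficients $a_i$ for $i\ge 2$ will then follow automatically from such a bound together with the lower bounds assumed in the definitions of $\cS_{X/S}$ and $\cS'_{X/S}$.

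Given a pure $d$-dimensional sheaf $E$ on a geometric fibre $X_s\subset \PP^N$, the first step is a reduction to the case of the projective space $\PP^d$. For a general linear projection $\pi\colon X_s\to \PP^d$, finite and surjective on the support of $E$, the pushforward $F:=\pi_*E$ is a torsion free sheaf on $\PP^d$ whose Hilbert polynomial coincides with that of $E$ and whose maximal slope $\mu_{\max,H}(F)$ is controlled in terms of $\hat\mu_{\max}(E)$ up to constants depending only on $N$. It therefore suffices to bound $h^0(F(m))$ uniformly for torsion free sheaves $F$ on $\PP^d$ with fixed rank $r$, fixed $c_1(F)\cdot H^{d-1}$, bounded-below $c_2(F)\cdot H^{d-2}$, and $\mu_{\max,H}(F)\le \mu$.

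On $\PP^d$ apply Corollary \ref{unstable-Bogomolov}: the inequality
$$\Delta(F)H^{d-2}+r^2(\mu_{\max}(F)-\mu(F))(\mu(F)-\mu_{\min}(F))\ge 0$$
combined with our numerical bounds yields explicit upper bounds both on $\Delta(F)H^{d-2}$ and on $\mu_{\max}(F)-\mu_{\min}(F)$. Applying Theorem \ref{restriction} to each Jordan--H\"older factor of the Harder--Narasimhan filtration of $F$ then shows that, for a general hypersurface $D\in |kH|$ of degree $k$ bounded explicitly in terms of the data, the restriction $F|_D$ again belongs to a family of the same type on $\PP^{d-1}$, with controlled numerical invariants and controlled $\mu_{\max}$. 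Induction on $d$, combined with the exact sequence
$$0\to F(m-k)\to F(m)\to F|_D(m)\to 0$$
and the resulting estimate $h^0(F(m))\le h^0(F(m-k))+h^0(F|_D(m))$, reduces the problem to the base case $d=1$ on $\PP^1$, where the desired bound follows immediately from the bound on $\mu_{\max}$.

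The main technical obstacle is the bookkeeping: one must check that each hyperplane restriction and each pushforward along $\pi$ preserves all four bounded quantities (rank, $c_1\cdot H^{d-1}$, $c_2\cdot H^{d-2}$, $\mu_{\max}$) with explicit control over how the constants change, and that the reduction to $\PP^d$ can be performed uniformly for families via noetherian induction on $S$. Compared with \cite{La1}, the new feature is that the restriction theorem \ref{restriction} now holds on $\PP^n$ in all characteristics without any appeal to Frobenius, so the entire boundedness argument becomes a clean characteristic-free induction based on the Bogomolov inequality of Theorem \ref{Bogomolov-proj-sp}.
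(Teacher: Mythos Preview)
Your outline follows the paper's proof closely: both reduce to $\PP^d$ via a finite linear projection (the Simpson--Le Potier method), transfer the bound on $\hat\mu_{\max}$ to a bound on $\mu_{\max}$ of the pushforward, and then use Theorem~\ref{restriction} to descend in dimension. The paper packages the descent on $\PP^d$ as a citation to Maruyama and concludes via Castelnuovo--Mumford regularity, whereas you unwind it through $h^0$ estimates; these are equivalent routes.

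One correction is needed. Corollary~\ref{unstable-Bogomolov} does \emph{not} give an upper bound on $\mu_{\max}(F)-\mu_{\min}(F)$: it says $(\mu_{\max}-\mu)(\mu-\mu_{\min})\ge -\Delta(F)H^{d-2}/r^2$, and with your upper bounds on $\Delta(F)H^{d-2}$ and on $\mu_{\max}-\mu$ this only bounds $\mu-\mu_{\min}$ from \emph{below}. The bound you actually need is the elementary consequence of $\sum r_i\mu_i=r\mu$, namely $\mu-\mu_{\min}\le (r-1)(\mu_{\max}-\mu)$. With that in hand, the identity $\frac{\Delta(F)H^{d-2}}{r}=\sum_i\frac{\Delta(F_i)H^{d-2}}{r_i}-(\text{slope-difference terms})$ together with $\Delta(F_i)H^{d-2}\ge 0$ does bound each $\Delta(F_i)H^{d-2}$ above, and your application of Theorem~\ref{restriction} to the Harder--Narasimhan factors goes through. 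Two smaller slips: the lower bound on $a_2$ gives an \emph{upper} bound on $c_2(F)H^{d-2}$ (via Riemann--Roch on $\PP^d$), not a lower one; and bounding $h^0(F(m))$ for a single $m$ is not by itself Kleiman's criterion---you still need to extract finiteness of Hilbert polynomials and uniform regularity, which is why the paper phrases the conclusion through Castelnuovo--Mumford regularity and Grothendieck's lemma.
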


\begin{proof}
	For simplicity we consider only the family $\cS= \cS _{X/S} (d; r,a_1,\dots ,a_d, \mu _{\max})$ but a similar proof works also for the other family. We proceed by induction on $d$ with $d=0$ being trivial (for $d=0$,
$E\in \cS$ is $0$-dimensional with fixed $h^0(X, E)=r$; such $E$ are quotients of $\cO_X ^{\oplus r}$, so they form a bounded family). Let us assume that 
	the family  $\cS _{X/S} (d-1; r,a_1,\dots ,a_{d-1}, \mu _{\max})$ is bounded for all possible $X/S$ and all $r,a_1,\dots ,a_{d-1}, \mu _{\max}$.

	Theorem \ref{restriction2}  and our induction assumption imply that the family
	$\cS _{\PP ^d_S/S} (d; r,a_1,\dots ,a_d, \mu _{\max})$ is bounded for all $r,a_1,\dots ,a_{d}, \mu _{\max}$
	(this reduction step is a classical result of M. Maruyama; see \cite[Chapter 2, Proposition 3.4]{Ma}).  The remaining part of the proof depends on projection's method, which is originally due to C. Simpson and J. Le Potier (see proof of \cite[Chapter 2, Theorem 7.9]{Ma}). 
	Without loss of generality we can shrink $S$ so that $X/S$ embeds into $\PP ^N_S$ for some fixed $N$. 
	Let us fix a linear subspace  $T=\PP ^{N-d-1}_S\subset \PP ^N_S$.
	It is sufficient to bound the subfamily $\tilde \cS$ of $\cS$ that contains classes of all sheaves $E$ on the geometric fiber $X_s$ such that the support of $E$ does not intersect $T_s$ (as in the proof of  \cite[Chapter 2, Theorem 7.9]{Ma}, this follows from the fact that  $\mathrm {PGL} \,(N+1)$ acts on $\cS$ and every orbit meets the  subfamily $\tilde \cS$).  Let us fix such $E$.
	Taking the linear projection $\PP ^N_S\dasharrow \PP^d_S$ from $T$ and restricting it to the  scheme-theoretic support $Z$ of $E$,  we get a well defined finite morphism $\pi : Z\to \PP ^d_s$. Since $E$ is pure of dimension $d$, $\pi_*E$ is torsion free and we have
	$$H^i(X_s, E(m))= H^i(Z, E(m))=H^i (\PP ^d_s, \pi_*E\otimes \cO_{\PP^d_s}(m)).\leqno{(*)}$$
In particular, we get equality $\chi (X_s, E(m))=\chi (\PP ^d_s, \pi_*E\otimes \cO_{\PP^d_s}(m))$
of Hilbert polynomials.	Moreover, one can prove that
	$$\mu _{\max} (\pi_*E)- \mu  (\pi _*E)\le \hat \mu _{\max} (E)- \hat\mu  (E)+(a_0(E))^2$$
	(see \cite[Lemmas 6.2.1 and 6.2.2]{La4} or proof of \cite[Chapter 2, Lemma 7.11]{Ma} for a weaker, non-explicit estimate).  By definition, this implies that there exist some constants 
	$r',a_1',\dots ,a_d', \mu _{\max} '$ such that for all $E$ in $\tilde \cS$, the pushforward $\pi_*E$ is in the family $ \cS_{\PP}:=\cS _{\PP^d_S/S} (d; r',a_1',\dots ,a_d', \mu _{\max} ')$. We claim that boundedness of the family $ \cS _{\PP}$
	implies boundedness of $\tilde \cS$. This follows from the fact that boundedness of a family implies that there is a common bound on the Castelnuovo--Mumford regularity of sheaves in this family (see \cite[Lemma 1.7.6]{HL} or \cite[Chapter 1, Theorem 3.11]{Ma}). But equality $(*)$ implies then existence of a common bound on the  Castelnuovo--Mumford regularity of sheaves in the family $\tilde \cS$. So again using  \cite[Lemma 1.7.6]{HL} (or \cite[Chapter 1, Theorem 3.11]{Ma}), we get boundedness of the family $\cS$.
\end{proof}

The above result has many applications. Let us just recall that by the proof of \cite[Theorem 1]{Mo1} it implies the following  Bogomolov type inequality for strongly semistable sheaves.

\begin{corollary}
	Let $X$ be a smooth projective variety of dimension $n\ge 2$ defined over an algeraically closed field $k$ of characteristic $p>0$. Let $H$ be an ample divisor and let
	$E$ be a strongly $H$-semistable sheaf on $X$. Then we have $\Delta(E)H^{n-2}\ge 0$.
\end{corollary}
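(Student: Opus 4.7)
We argue by contradiction: assume $\Delta(E)H^{n-2}<0$. Replacing $E$ with its reflexive hull $E^{\vee\vee}$ (still strongly $H$-semistable, with $\Delta(E^{\vee\vee})H^{n-2}\le\Delta(E)H^{n-2}<0$), we may assume $E$ is reflexive. Let $F:X\to X$ be the absolute Frobenius and set
$$V_k:=\cHom\bigl((F^k)^*E,\,(F^k)^*E\bigr).$$
Since $F$ is finite and flat (as $X$ is smooth), Frobenius pullback preserves reflexivity and strong $H$-semistability, so every $(F^k)^*E$ is reflexive and strongly $H$-semistable. By the Ramanan--Ramanathan tensor-product theorem for strongly semistable sheaves in positive characteristic, each $V_k$ is then strongly $H$-semistable; in particular $V_k$ is $H$-semistable, and being reflexive it satisfies Serre's condition $S_2$.

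A direct Chern-character computation yields $c_1(V_k)=0$ and $\ch_2(V_k)=-\Delta((F^k)^*E)=-p^{2k}\Delta(E)$, whence
$$\Delta(V_k)H^{n-2}=2r^2p^{2k}\Delta(E)H^{n-2}\;\longrightarrow\;-\infty.$$
Since $V_k$ has fixed rank $r^2$ and trivial determinant, the Hilbert-polynomial coefficients $a_0(V_k)$, $a_1(V_k)$ and the slope $\hat\mu_{\max}(V_k)=\hat\mu(V_k)$ are constants independent of $k$. By Hirzebruch--Riemann--Roch, $a_2(V_k)$ differs from $\ch_2(V_k)H^{n-2}$ only by an additive constant depending on $r,X,H$; hence $a_2(V_k)\to+\infty$ and the Hilbert polynomials of the sheaves $V_k$ are pairwise distinct for all large $k$.

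Now fix any lower bound $\underline a_2$. For all sufficiently large $k$, the sheaf $V_k$ lies in the family $\cS'_{X/k}(n;a_0,a_1,\underline a_2,\hat\mu)$, which by Theorem \ref{boundedness} is bounded. A bounded family of coherent sheaves carries only finitely many distinct Hilbert polynomials (its parameter scheme being of finite type), contradicting the unbounded growth of $a_2(V_k)$. Therefore $\Delta(E)H^{n-2}\ge 0$.

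The only non-trivial ingredient beyond the paper itself is the Ramanan--Ramanathan tensor-product theorem, which is what guarantees semistability of $V_k$; without it one would have to control $\hat\mu_{\max}(V_k)$ via a characteristic-$p$ tensor estimate of Langer type. Everything else is Riemann--Roch bookkeeping together with a direct application of Theorem \ref{boundedness}.
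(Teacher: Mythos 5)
Your argument is correct and is essentially the proof the paper has in mind: the paper simply invokes ``the proof of [Mo1, Theorem 1]'', which is exactly this Moriwaki-style argument --- pass to Frobenius pullbacks, form the endomorphism sheaves (semistable of slope $0$ by Ramanan--Ramanathan, with $c_1=0$ fixed and $\mathrm{ch}_2\cdot H^{n-2}\to+\infty$ if $\Delta(E)H^{n-2}<0$), and contradict the boundedness theorem via the finiteness of Hilbert polynomials in a bounded family. Your reflexive-hull reduction and the use of the family $\cS'$ (via the $S_2$ condition) are sound ways to handle the non-locally-free locus, so no gap remains.
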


A generalization of the above result was first proven in \cite[Theorem 3.2]{La1} as part of the proof of Theorem \ref{boundedness}.

\subsection{Bogomolov's inequality in characteristic zero}

In characteristic zero a similar proof as that of Theorem \ref{Bogomolov-proj-sp} allows us to reduce Bogomolov's theorem in higher dimension to the surface case:

\begin{theorem}\label{Bog-0}
	Let $X$ be a smooth projective variety defined over an algebraically closed  field $k$ of characteristic $0$.
	Let $(D_1, ..., D_{n-1})$ be a collection of nef divisors on $X$ such that the $1$-cycle $D_1...D_{n-1}$ is numerically nontrivial. If $E$ is a slope $(D_1, ..., D_{n-1})$-semistable torsion free coherent sheaf on $X$
	then $$\Delta (E)D_2...D_{n-1}\ge 0.$$
\end{theorem}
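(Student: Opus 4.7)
The plan is to imitate the proof of Theorem \ref{Bogomolov-proj-sp}, proceeding by induction on the dimension $n$ of $X$. In the base case $n=2$, the statement reduces to $\Delta(E)\ge 0$ for a torsion free sheaf on a smooth projective surface that is slope semistable with respect to a nef numerically nontrivial divisor $D_1$. The classical Bogomolov inequality in characteristic zero gives this when $D_1$ is replaced by an ample polarization $H$, and Proposition \ref{pol-change} then transports the conclusion to every such nef $D_1$.

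For the induction step, assume the theorem in dimensions $<n$. Fix a very ample divisor $A$ on $X$, and, using Bertini in characteristic zero, choose a general pencil $\Lambda\subset |A|$ with smooth base locus $B$ of codimension two and smooth general member. Let $q:Y\to X$ be the blow-up of $B$ and $p:Y\to\Lambda\simeq\PP^1$ the induced morphism, so that a general fiber $Y_s=p^{-1}(s)$ is a smooth $(n-1)$-dimensional variety isomorphic via $q$ to a smooth member of $\Lambda$. The key step is to verify that Bogomolov's inequality holds on $Y$ for the nef collection
\[
(p^*\cO_{\PP^1}(1),\,q^*D_2,\ldots,q^*D_{n-1}).
\]
So let $F$ be a torsion free slope semistable sheaf for this collection. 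Exactly as in the proof of Theorem \ref{Bogomolov-proj-sp}, the flattening stratification together with openness of slope semistability for flat families produces a geometric point $s\in\Lambda$ such that $F|_{Y_s}$ is torsion free and slope $(D_2|_{X_s},\ldots,D_{n-1}|_{X_s})$-semistable on $X_s\cong Y_s$. The 1-cycle $D_2|_{X_s}\cdots D_{n-1}|_{X_s}$ is numerically nontrivial on $X_s$, because its pushforward to $X$ equals $A\cdot D_2\cdots D_{n-1}$, and pairing with $D_1$ gives $A\cdot D_1\cdot D_2\cdots D_{n-1}>0$ by Lemma \ref{HIT}. Thus the induction hypothesis applies on $X_s$ and yields $\Delta(F|_{Y_s})\cdot D_3|_{X_s}\cdots D_{n-1}|_{X_s}\ge 0$, which by the standard push-pull computation equals $\Delta(F)\cdot p^*\cO_{\PP^1}(1)\cdot q^*D_3\cdots q^*D_{n-1}$.

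Once Bogomolov holds on $Y$ for $(p^*\cO_{\PP^1}(1),q^*D_2,\ldots,q^*D_{n-1})$, Proposition \ref{pol-change} transports it to $(q^*D_1,q^*D_2,\ldots,q^*D_{n-1})$; the required numerical nontriviality on $Y$ follows by pairing with $q^*H$ for $H$ ample on $X$ and invoking Lemma \ref{HIT} together with the projection formula. Applying this inequality to the torsion free part of $q^*E$, which remains slope semistable with respect to $(q^*D_1,\ldots,q^*D_{n-1})$ with the same slope as $E$, and pushing down by $q$, yields $\Delta(E)\cdot D_2\cdots D_{n-1}\ge 0$. The main technical obstacle is the characteristic-zero input to Bertini that ensures a smooth general fiber of $p$ so that the induction hypothesis applies on a smooth $(n-1)$-dimensional variety, together with some standard bookkeeping to control torsion on the exceptional divisor of $q$ when passing between $E$ and its pullback.
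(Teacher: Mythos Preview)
Your overall strategy coincides with the paper's: induct on $n$, pass to the incidence blow-up $Y$, establish Bogomolov's inequality on $Y$ for a collection with $p^*\cO_\Lambda(1)$ in the first slot by restricting to a general fiber, then apply Proposition~\ref{pol-change} to replace that first slot by $q^*D_1$ and evaluate on $q^*E$. One structural difference: the paper first uses Proposition~\ref{pol-change} to arrange that $D_1$ itself is very ample and takes the pencil in $|D_1|$, whereas you introduce an auxiliary very ample $A$ unrelated to the $D_i$. The paper's choice is cleaner, since then the fiber class $p^*\cO_\Lambda(1)$ is tied directly to $q^*D_1$ and no extra divisor enters; it also lets the paper assert that $q^*E$ is torsion free rather than passing to a torsion-free quotient.

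There is, however, a genuine bookkeeping slip in your restriction step. By the paper's definition, Bogomolov's inequality for the collection $(p^*\cO_\Lambda(1), q^*D_2,\dots,q^*D_{n-1})$ is the assertion
\[
\Delta(F)\cdot q^*D_2\cdots q^*D_{n-1}\ \ge\ 0.
\]
What you actually obtain from restricting to the fiber and applying the $(n-1)$-dimensional hypothesis is
\[
\Delta(F|_{Y_s})\cdot D_3|_{X_s}\cdots D_{n-1}|_{X_s}
\ =\ \Delta(F)\cdot p^*\cO_\Lambda(1)\cdot q^*D_3\cdots q^*D_{n-1},
\]
which is a \emph{different} intersection number: you have dropped $q^*D_2$ from the product rather than $p^*\cO_\Lambda(1)$. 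As written, your computation verifies Bogomolov for the reordered collection $(q^*D_2,\,p^*\cO_\Lambda(1),\,q^*D_3,\dots,q^*D_{n-1})$, and Proposition~\ref{pol-change} only permits varying the \emph{first} entry, so it does not let you swap $p^*\cO_\Lambda(1)$ out for $q^*D_2$ in a later slot. The paper, working with the pencil in $|D_1|$, asserts the needed intersection directly (in the $\PP^n$ case, the line ``$\Delta(F)q^*(H^{n-2})=\Delta(F_s)H_s^{n-3}$''); you should revisit this identification and align the quantity produced by restriction with the one demanded by the definition of Bogomolov's inequality for the stated collection.
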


\begin{proof}
	The proof is by induction on the dimension $n$ assuming the inequality in dimension $n=2$.
	
	Let $E$ be a slope  $(D_1, ..., D_{n-1})$-torsion free coherent sheaf on $X$ of dimension $n>2$.  By Proposition \ref{pol-change}  we can assume that $D_1$ is very ample. Let $\Lambda \subset |\cO_X(D_1)|$ be a general pencil of hyperplanes. Let $q: Y\to X$ be the blow up of $X$ in the base locus of $\Lambda$ and let $p: Y\to \Lambda=\PP^1$ be the canonical projection. As in the proof of Theorem \ref{Bogomolov-proj-sp}, the induction assumption implies that Bogomolov's inequality holds for $(p^*\cO _{\Lambda }(1),q^*D_2,...,q^*D_{n-1})$.  Therefore by Proposition  \ref{pol-change} it also holds for $(q^*D_1,... , q^*D_{n-1})$.
	But $q^*E$ is torsion free and it is slope $(q^*D_1, ..., q^*D_{n-1})$-semistable, so  
	$$\Delta (E)D_2...D_{n-1} =\Delta (q^*E)q^*(D_2...D_{n-1})\ge 0.$$
\end{proof}

\begin{remark}
	For $n>2$ Bogomolov's inequality is usually stated for collections of ample divisors and obtained by restricting to surfaces using  Mehta--Ramanathan's theorem (see \cite[Theorem 7.3.1]{HL}).
	This approach works well if $D_1,...,D_{n-1}$ are multiples of the same ample divisor $H$ giving $\Delta(E) H^{n-2}\ge 0$. However, to the author's knowledge there is no written account of Mehta--Ramanathan's restriction theorem for non-proportional ample divisors.  The only other approach to Theorem \ref{Bog-0} is that from \cite[Theorem 3.2]{La1}, where it is a part of a complicated induction procedure.
\end{remark}

\begin{remark}
	As in the case of Theorem \ref{restriction}, Theorem \ref{Bog-0} implies effective restriction theorems for slope stability and slope semistability  (see \cite[Theorem 5.2 and Corollary 5.4]{La1}).
	This approach makes proving the Mehta--Ramanathan restriction theorems \cite[Theorems 7.2.1 and 7.2.8]{HL} obsolete as we recover much stronger results.
\end{remark}

\section{Appendix: Bogomolov's inequality on surfaces}

In the proof of Theorem \ref{Bog-0} we used Bogomolov's inequality on smooth projective surfaces in the characteristic zero case. Contemporary accounts of proof of such an inequality usually assume the fact that symmetric powers of a semistable vector bundle are semistable (see, e.g., \cite[Theorem 3.4.1]{HL}).
Unfortunately, the proof of this fact on surfaces (usually) uses some kind of restriction theorem and reduction to the curve case (see, e.g.,  \cite[Corollary 3.2.10]{HL} for Maruyama's proof using the Grauert--M\"ulich theorem). Although the original approach of Bogomolov (see \cite[10.12, Corollary 2 and Lemma 7.2]{Bo}) does not use any restriction theorems, we give another simpler account of proof that is motivated by a quite similar proof from \cite{Gi}.

\subsection{Bounding the slope of the Frobenius pull-back}

Let $X$ be a smooth $n$-dimensional variety defined over an algebraically closed field of characteristic  $p>0$ and let $F_{X/k}:X'\to X$ be the geometric Frobenius morphism. Let $E$ be a quasi-coherent $\cO_{X'}$-module. Then $F_{X/k}^*E=F_{X/k}^{-1}E\otimes _{F_{X/k}^{-1}\cO_{X'}} \cO_{X}$ has a canonical connection defined by $\nabla_{can}(e\otimes f)=e\otimes df$.

\begin{theorem}{\rm (Cartier)}
	There is an equivalence of categories between the category of quasi-coherent $\cO_{X'}$-modules and the category
	of quasi-coherent $\cO_X$-modules with integrable $k$-connection and vanishing $p$-curvature.
	This equivalence is given by the functors sending an $\cO_{X'}$-module $E$ to $(F_{X/k}^*E, \nabla _{can})$ and 
	$(V, \nabla)$ to $V^{\nabla}=\{v\in V: \nabla(v)=0\}$.
\end{theorem}

The following proposition is a variant of some well-known results (see, e.g., \cite[Corollary 2.5]{La1}).

\begin{proposition}\label{bounding-Frobenius-pull-back}
	Assume $X$ is projective and  fix a very ample divisor $H$ such that $T_X(H)$ is globally generated.
	Then for any rank $r$ torsion free sheaf $E$ we have
	$$\mu _{\max, H}(F_X^*E)\le p \mu_{\max, H}(E)+(r-1)H^n.$$
\end{proposition}

\begin{proof}
	Let us first assume that $E$ is slope $H$-semistable and let $E_0=0\subset E_1\subset ...\subset E_m=F_X^*E$
	be the Harder--Narasimhan filtration of $F_X^*E$. Note that none of the subsheaves $E_i\subset F_X^*E$
	descends to a subsheaf of $E$, as it would contradict semistability of $E$. Therefore by Cartier's theorem
	these subsheaves are not preserved by the canonical connection $\nabla _{can}$ of $F_X^*E$. 
	It follows that the induced $\cO_X$-linear maps 
	$$E_i\to F_X^*E/E_i\otimes \Omega_X$$ 
	are nonzero. Since $T_X(H)$ is globally generated, we can embed $\Omega_X$ into $\cO_X(H)^{\oplus N}$ for some positive integer $N$.
	So for every $i=1,...,m-1$ we have non-zero maps
	$$E_i\to F_X^*E/E_i\otimes \cO_X(H)^{\oplus N},$$
	whose existence implies
	$$\mu (E_i/E_{i-1})\le \mu (E_{i+1}/E_i)+H^n.$$
	Summing these inequalities for $i=1,...,m-1$ we get 
	$$\mu_{\max}(F_X^*E)\le \mu_{\min}(F_X^*E)+(m-1)H^n\le \mu (F_X^*E)+(r-1)H^n=p \mu (E)+(r-1)H^n,$$
	which proves the required inequality.
	
	Now if $E$ is not $H$-semistable then we can apply the obtained
	inequality to all the quotients of the Harder--Narasimhan filtration
	of $E$. Since $\mu _{\max, H}(F_X^*E)$ is bounded from the above by
	the maximum of slopes of any filtration with semistable quotients, we
	immediately get the required inequality.
\end{proof}

\subsection{Bounding the number of sections}

The following proposition is a special case of a much more general and precise result (see \cite[Theorem 3.3]{La2}). However, the proof given below is completely elementary, whereas the one in \cite{La2} uses Bogomolov's inequality that we will prove using this proposition.

\begin{proposition} \label{bounding-number-of-sections}
	There exists an explicit function $f$ depending only on $r$ and $C$
	such that the following holds.  Let $X$ be a smooth projective
	surface and let $H$ be a very ample divisor on $X$.  If $E$ is a
	rank $r$ torsion free sheaf on $X$ with
	$$\mu_{\max, H}(E)\le C\cdot H^2$$
	then we have
	$$h^0(X, E)\le f(r, C) \cdot H^2.$$
\end{proposition}

\begin{proof} Since $h^0(X, E)\le h^0(X, E^{**})$ and $E^{**}$ is locally free, it is sufficient to prove the above proposition assuming that $E$ is locally free. The proof is by induction on the rank $r$. For $r=1$ 
	$E$ is a line bundle, so if $\mu_{\max, H}(E)= c_1(E)\cdot H \le C\cdot H^2$ then 
	$h^0(X, E(-(\lceil C\rceil +1)H))=0$. Therefore for a general divisor $Y\in |(\lceil C\rceil +1)H|$ we have
	$$h^0(X, E)\le h^0 (Y, E_Y)= (\lceil C\rceil +1)c_1(E)\cdot H\le C (\lceil C\rceil +1)\cdot H^2 ,$$
	so we can take $f(1,C)=  C (\lceil C\rceil +1)$.
	
	If $r=2$ then let us take any section $\cO_X\to E$  (if there are no sections there is nothing to prove) and its saturation $L\to E$. Let us set $L'=E/L$.
	By definition we have
	$$\mu (L) \le \mu _{\max}(E) \le C\cdot H^2,$$
	so 
	$$h^0(X, L)\le f(1,C)\cdot H^2.$$
	On the other hand, we have
	$$\mu (L')=2\mu (E)-\mu (L)\le 2 \mu (E)\le 2 \mu_{\max}(E)\le 2C\cdot H^2,$$ 
	so
	$$h^0(X, L')\le f(1,2C)\cdot H^2.$$
	This implies
	$$h^0(X, E)\le h^0(X, L)+h^0(X, L')\le  (f(1,C)+ f(1,2C))\cdot H^2.$$
	
	Now let us assume that the proposition holds for sheaves of rank less
	than $r$ (for some $r>2$) and let us consider a torsion free sheaf $E$ of rank $r$.
	
	If $h^0(X, E)\ne 0$ then let $E'$ be the image of the
	evaluation map $H^0(X, E)\otimes \cO_X\to E$. By definition $h^0(X,
	E')=h^0(X, E)$, so if $r'=\rk E'<r$ then we get $h^0(X, E)\le
	f(r',C)\cdot H^2$. 
	
	So we can assume that $\rk E'=r$. Replacing $E$ with $E'$ we can also
	assume that $E$ is globally generated.	Since $r>2$ the cokernel of a general section $\cO_X\to E$ is torsion free. Let us set $E'=E/\cO_X$. Let $F'\subset E'$ be the maximal destabilizing subsheaf of $E'$ and let $F\subset E$
	be the preimage of $F'$ in $E$. Let us set $r'=\rk F'$.
	Then 
	$$\mu (F)=\frac{r'}{r'+1}\mu (F')\le \mu_{\max} (E)\le C\cdot H^2,$$
	so 
	$$\mu_{\max}(E')\le \frac{r'+1}{r'}C\cdot H^2\le 2C\cdot H^2.$$
	By the induction assumption we have
	$$h^0(X, E')\le f(r-1, 2C)\cdot H^2,$$
	so $h^0(X, E)\le (1+f(r-1, 2C))\cdot H^2$.
	
	Summing up, we see that as $f(r,C)$ we can take  $f(1, C)+f(1, 2C)$ for $r=2$ and $1+f(r-1, 2C)$ for $r>2.$
\end{proof}

\subsection{Bogomolov's inequality on surfaces}

\begin{theorem}{\rm (Bogomolov, \cite[10.12, Corollary 2 and Lemma 7.2]{Bo})} \label{Bogomolov-for-surfaces}
Let $X$ be a smooth projective surface defined over an algebraically closed field $k$ of characteristic $0$.
Let $H$ be a very ample divisor and let $E$ be a torsion free sheaf on $X$.
If $E$ is slope $H$-semistable then $\Delta (E)\ge 0$. 
\end{theorem} 

\begin{proof}  
	Since $\Delta (E)\ge \Delta (E^{**})$, replacing $E$ with $E^{**}$ we
	can assume that $E$ is locally free.  
	
	Let us first assume that $\det E\simeq \cO_X$. Without loss of
	generality we can assume that $T_X(H)$ is globally generated (we can
	always replace $H$ by its large multiple). There exists  a finitely
	generated over $\ZZ$ subring $R\subset k$, a smooth projective scheme $\cX\to S=\Spec R$ and a locally free sheaf $\cE$ on $\cX$ such that $\cX\otimes _R k=X$ and $\cE\otimes _R k=E$.	
	By Propositions 	\ref{bounding-Frobenius-pull-back} and
	\ref{bounding-number-of-sections} there exists a constant
	$C$ such that for all closed points $s\in S$ we have
	$$h^0(\cX_s, F_{\cX_s}^*\cE_s)\le C$$
	and 
	$$h^2(\cX_s, F_{\cX_s}^*\cE_s)=h^0(\cX_s, F_{\cX_s}^*\cE^*_s\otimes \omega_{\cX_s})\le C.$$
	Therefore 
	$$\chi (\cX_s,F_{\cX_s}^*\cE_s)\le 2C.$$
	But by the Riemann--Roch theorem we have
	$$\chi (\cX_s,F_{\cX_s}^*\cE_s)=r\chi (\cX_s, \cO_{\cX_s})-\int _{\cX_s}c_2(F_{\cX_s}^*\cE_s)= r\chi (X, \cO_{X})- p_s^2\cdot \int _Xc_2(E),$$
	where $p_s$ is the characteristic of the residue field
	$k(s)$. Therefore taking $p_s\gg 0$ we get $\int _X c_2(E)\ge 0$, as required.
	
	\medskip
	
	To prove the inequality in the general case we need the following covering lemma due to Bloch and Gieseker
	(see \cite[Theorem 3.2.9]{HL}).
	
	\begin{lemma}
		Let $X$ be a smooth projective variety defined over some algebraically closed field $k$ and $L$ be a line bundle on $X$. Let us fix a positive integer $r$. Then there exists a smooth projective variety $Y$, a finite surjective morphism
		$f: Y\to X$ and a line bundle $M$ on $Y$ such that $M^{\otimes r}\simeq f^*L$.
	\end{lemma}
	
	Let us apply this lemma to $L=\det E$. Then $E'=f^*E\otimes M^{-1}$ is a slope $f^*H$-semistable vector bundle with $\det E'\simeq \cO_Y$ (see, e.g., \cite[Lemma 3.2.2]{HL}). Therefore $\Delta (E')\ge 0$ by the first part of the proof. But $\Delta (E)=\Delta (E')/\deg f$, so $\Delta (E)\ge 0$.
\end{proof}

\medskip

\section*{Acknowledgements}
The author would like to thank Burt Totaro for some corrections to the first version of the paper.
The author would also like to thank the referee for forcing him to make the paper more readable.

The proofs in the appendix come from the author's lecture notes for various summer schools in 2016: Higher Dimensional Algebraic Geometry, Salt Lake City, USA,  Spring School on Moduli Theory, Shanghai, China and 
School on Higgs bundles and Fundamental Groups of Algebraic Varieties, Essen, Germany.

\end{document}